\documentclass{amsart}
\numberwithin{equation}{section}
\usepackage{amssymb}
\usepackage{amsmath}
\usepackage{amsthm}
\usepackage{amscd}
\usepackage{amsfonts}
\usepackage{ascmac}
\usepackage[usenames]{color}
\usepackage{graphics}

\pagestyle{plain}
\theoremstyle{plain}
\newtheorem{theorem}{Theorem}[section]
\newtheorem{proposition}[theorem]{Proposition}

\newtheorem{lemma}[theorem]{Lemma}

\newtheorem{definition}[theorem]{Definition}

\newtheorem{remark}[theorem]{Remark}

\newcommand{\bfC}{{\mathbf C}}

\newcommand{\bfR}{{\mathbf R}}
\newcommand{\bfZ}{{\mathbf Z}}

\newcommand{\bari}{{\overline i}}
\newcommand{\barj}{{\overline j}}

\newcommand{\barl}{{\overline \ell}}

\newcommand{\barpsi}{{\overline \psi}}
\newcommand{\barvarphi}{{\overline{\varphi}}}
\newcommand{\barpartial}{{\overline \partial}}

\newcommand{\wtv}{{\widetilde v}}

\newcommand{\mapright}[1]{\smash{\mathop{   \hbox to 0.7cm{\rightarrowfill}}
  \limits^{#1}}}

\newcommand{\Ker}{{\rm Ker}}

\newcommand{\Ric}{\operatorname{Ric}}

\newcommand{\grad}{\mathrm{grad}}

\newcommand{\Fut}{\mathrm{Fut}}

\newcommand{\lb}{\left (}
\newcommand{\rb}{\right )}

\newcommand{\Aut}{\operatorname{Aut}}
\newcommand{\Isom}{\operatorname{Isom}}
\newcommand{\TildeT}{\widetilde{T}}
\newcommand{\Tildefrakt}{\widetilde{\frak t}}
\title{Deformations of Fano manifolds with weighted solitons}
\author{Akito Futaki}
\address{Yau Mathematical Sciences Center, Tsinghua University, Haidian district, Beijing 100084, China}
\email{futaki@tsinghua.edu.cn}

\date{September 7, 2023}

\pagestyle{plain}

\begin{document}

\begin{abstract}
We consider weighted solitons on Fano manifolds which include K\"ahler-Ricci solitons, Mabuchi solitons and 
base metrics inducing Calabi-Yau cone metrics outside the zero sections of the canonical line bundles
(Sasaki-Einstein metrics on the associated $U(1)$-bundles). 
In this paper, we give a condition for a weighted soliton on a Fano manifold $M_0$ 
to extend to weighted solitons on small deformations $M_t$ of the Fano manifold $M_0$. 
More precisely,
we show that all the members $M_t$ of the Kuranishi family of a Fano manifold $M_0$ with a weighted soliton have weighted solitons if and only if the dimensions of $T$-equivariant automorphism groups of $M_t$ 
are equal to that of $M_0$, and also 
if and only if the $T$-equivariant automorphism groups of $M_t$ are all isomorphic to that of $M_0$, 
where the weight functions are defined on the moment polytope of the Hamiltonian $T$-action. This 
generalizes a result of Cao-Sun-Yau-Zhang
for K\"ahler-Einstein metrics.
\end{abstract}

\maketitle


\section{Introduction}


Let $M$ be a Fano manifold, i.e. a compact complex manifold with positive first Chern class, of complex dimension $m$.
We regard $2\pi c_1(M)$ as a K\"ahler class.
The K\"ahler form $\omega$ is expressed as
$$ \omega = \sqrt{-1}\, g_{i\barj}\, dz^i \wedge dz^\barj$$
and the K\"ahler metric $g_{i\barj}$ is often identified with the K\"ahler form $\omega$.
Let $T$ be a real compact torus in the automorphism group $\Aut (M)$, and assume that $\omega$ is $T$-invariant. Since $M$ is Fano and simply connected
the $T$-action is Hamiltonian with respect to $\omega$. Since the $T$-action naturally lifts to the anti-canonical line bundle
$K_M^{-1}$ we have a canonically normalized moment map $\mu_\omega : M \to \mathfrak t^\ast$
where $\mathfrak t$ is the Lie algebra of $T$ and $\mathfrak t^\ast$ its dual space, c.f. Appendix in \cite{FutakiZhang18}. Let
$\Delta := \mu_{\omega}(M)$ be the moment polytope. Then $\Delta$ is independent of $\omega \in 2\pi c_1(M)$.
Let $v$ be a positive smooth function on $\Delta$. Regarding $\mu$ as coordinates on $\Delta$ using the action angle
coordinates, we may sometimes write $v(\mu)$ instead of $v$. The pull-back $\mu^\ast_\omega v$ is a smooth function on $M$, and
for this we write $v(\mu_\omega)=\mu^\ast_\omega v = v \circ \mu_\omega$. 

We say that a K\"ahler metric $\omega$ in $2\pi c_1(M)$ a {\it weighted $v$-soliton} or simply {\it $v$-soliton} if
$$ \Ric(\omega) - \omega = \sqrt{-1} \partial\barpartial \log v(\mu_\omega)$$ 
where $\Ric(\omega) = -i \partial\barpartial \log \omega^m$ is the Ricci form.
We also call $\omega$ simply a {\it weighted soliton}
when it is $v$-soliton for some $v$, or when $v$ is obvious from the context.
Examples of weighted solitons are a K\"ahler-Ricci soliton when $v(\mu) = e^{\langle\mu,\xi\rangle}$ for some $\xi \in \mathfrak t$, 
a Mabuchi solitons when $v(\mu) = \langle\mu, \xi\rangle + a$ for some positive constant $a$, and a basic metric which 
which induces Calabi-Yau cone metrics outside the zero sections of the canonical line bundle and hence
a Sasaki-Einstein metrics on the $U(1)$-bundle of $K_M^{-1}$ when $v(\mu) = \lb \langle\mu,\xi\rangle + a\rb^{-m-2}$, see \cite{Inoue19}, \cite{Lahdili18}, \cite{ACL21}, \cite{AJL21}, \cite{HanLi20}, 
\cite{LiChi21}.

In this paper we consider the Kuranishi family 
$\varpi : \mathfrak M \to B$ of deformations of a Fano manifold $M$
which is a complex analytic family of Fano manifolds where $B$ is an open set in $\bfC^n$
containing the origin $0$ and we write $M_t:= \varpi^{-1}(t)$ and require $M_0 = M$, c.f. \cite{Kodaira86}, \cite{KodairaSpencer58}, \cite{Kuranishi64}, \cite{MorrowKodaira}, 
\cite{Sun12}, \cite{FSZ22}.
Note that there is no obstruction
for Fano manifolds since
$$H^2(M_0,\Theta) \cong H^{m-2}(M_0, \Omega^1(K_{M_0})) =  0$$
by Serre duality and Kodaira-Nakano vanishing. 
For a given K\"ahler form $\omega \in 2\pi c_1(M)$
let $f \in C^\infty (M)$ satisfy
\begin{equation*}\label{Kura1}
\Ric(\omega) - \omega = \sqrt{-1} \partial\barpartial f.
\end{equation*}
The Kuranishi family we consider in this paper is described by a family of vector valued $1$-forms 
parametrized by $t \in B$
$$\varphi(t) = \sum_{i=1}^k t^i\varphi_i + \sum_{|I|\ge2}t^I\varphi_I\ \in\ A^{0,1}(T^\prime M)$$
such that 
\begin{equation}\label{Kura}
\begin{cases}
 \barpartial\varphi(t) = \frac12 [\varphi(t),\varphi(t)];\\
 \barpartial^\ast_f \varphi(t) = 0;\\
 \varphi_1, \cdots, \varphi_k\ \text{form a basis of the space of all}\ T^\prime M\text{-valued}\ \Delta_f\text{-harmonic}\ (0,1)\text{-forms}\\
\end{cases}
\end{equation}
where $\Delta_f = \barpartial_f^\ast\barpartial + \barpartial\,\barpartial_f^\ast$ is the weighted Hodge Laplacian with
$\barpartial_f^\ast$ the formal adjoint of $\barpartial$ with respect to the weighted $L^2$-inner product
$\int_M (\cdot,\cdot) e^f \omega^m$.
See \cite{FSZ22} for more detail about this Kuranishi family. We showed in \cite{FSZ22} that the K\"ahler form 
$\omega$ on $M_0 = M$ remains to be a K\"ahler form
on $M_t$. The main result of this paper is stated as follows.
\begin{theorem}\label{Main Thm} Suppose that $M_0$ has a weighted $v$-soliton. 
Consider the Kuranishi family \eqref{Kura} with $f=\log v(\mu_\omega)$. Then, shrinking $B$ if necessary,  the following statements are equivalent.
\begin{enumerate}
\item[(1)]  $M_t$ has a weighted $v$-soliton for all $t\in B$.
\item[(2)] $T$ is included in $\Aut(M_t)$, and for the centralizer $\Aut^T(M_t)$ of $T$ in $\Aut(M_t)$, $\dim \Aut^T(M_t) = \dim \Aut^T(M_0)$ for all 
$t \in B$.
\item[(3)] $T$ is included in $\Aut(M_t)$, and the identity component $\Aut_0^T(M_t)$ of $\Aut^T(M_t)$ is isomorphic to $\Aut_0^T(M_0)$ for all 
$t \in B$.
\end{enumerate}
\end{theorem}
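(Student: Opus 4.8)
The plan is to reduce the $v$-soliton equation to a finite-dimensional problem on the Kuranishi space $B$ and then to extract the three conditions from the orbit structure of a reductive group. First I would set up the reduction. Since $M_0$ carries a $v$-soliton, the weighted Matsushima--Lichnerowicz theorem (see \cite{Lahdili18}, \cite{AJL21}) shows that $G:=\Aut_0^T(M_0)$ is reductive, so $G=K^{\mathbb C}$ for a maximal compact subgroup $K\supseteq T$, and $\omega$ may be taken $K$-invariant. Because $\mu_\omega$ is $T$-equivariant and $K$ centralizes $T$, the function $f=\log v(\mu_\omega)$ is $K$-invariant, hence $\Delta_f$ commutes with the $K$-action on $A^{0,1}(T'M_0)$; therefore $K$, and then its complexification $G$, acts linearly on the space $\mathbb H$ of $T'M_0$-valued $\Delta_f$-harmonic $(0,1)$-forms, which is $H^1(M_0,T'M_0)$. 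Shrinking $B$, I may assume that $B\subseteq\mathbb H$ is a $G$-invariant neighbourhood of $0$ cut out by the $G$-equivariant Kuranishi obstruction map, that $\varpi$ is $G$-equivariant, and that $M_t$ and $M_{t'}$ are $T$-equivariantly biholomorphic when $t,t'$ lie in one $G$-orbit. By \cite{FSZ22}, $\omega$ remains K\"ahler on $M_t$, and for $t\in B^T$ the torus $T$ acts on $M_t$ with moment polytope again $\Delta$, so the $v$-soliton equation on $M_t$ makes sense with the same weight $v$.

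The core of the argument --- and the step I expect to be the main obstacle --- is the following dictionary: for $t\in B^T$, (i) $M_t$ carries a $v$-soliton if and only if the orbit $G\cdot t$ is closed in $B$; and (ii) $\Aut_0^T(M_t)$ is isomorphic to the identity component of $\mathrm{Stab}_G(t)$, equivalently every $T$-invariant holomorphic vector field on $M_t$ is induced from $M_0$. For (i) one would restrict the $v$-soliton equation to the Kuranishi slice --- using Moser's trick and the $\partial\bp$-lemma to stay inside the family --- and identify it, along each $G$-orbit, with a finite-dimensional moment-map equation for the $K$-action on $B$, normalized so that $0$ lies in the zero level set because $M_0$ is a $v$-soliton; then the Kempf--Ness theorem gives that a zero of the moment map is attained on $G\cdot t$ precisely when $G\cdot t$ is closed. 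Making this precise requires that the coercivity of the weighted Ding functional of $M_0$ --- a consequence of the existence of the $v$-soliton together with the uniqueness theorem for $v$-solitons --- persists after shrinking $B$, and that the moment-map reduction is genuinely finite-dimensional near the polystable point $0$; this is essentially \cite{FSZ22} in the weighted setting, and is where the real analysis lies. Statement (ii) is the standard fact that the Kuranishi family, built with the gauge condition $\bp^\ast\varphi(t)=0$, carries a versal action of $\Aut(M_0)$.

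Granting the dictionary, the equivalences follow formally, and I would prove them in the cycle $(3)\Rightarrow(2)\Rightarrow(1)\Rightarrow(3)$. The implication $(3)\Rightarrow(2)$ is immediate. For $(2)\Rightarrow(1)$: the clause $T\subseteq\Aut(M_t)$ in (2) means, after replacing $t$ within its $G$-orbit, that $B=B^T$, so (ii) applies to every $t\in B$ and the hypothesis gives $\dim\mathrm{Stab}_G(t)=\dim\Aut_0^T(M_t)=\dim\Aut_0^T(M_0)=\dim G$; since $G$ is connected this forces $\mathrm{Stab}_G(t)=G$, so $G$ acts trivially on $B$, every orbit is a single point, in particular closed, and (i) yields a $v$-soliton on every $M_t$. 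For $(1)\Rightarrow(3)$: (1) forces $T\subseteq\Aut(M_t)$, hence $B=B^T$, and by (i) every orbit $G\cdot t$ is closed in $B$; but a nontrivial linear action of a connected reductive group has non-closed orbits arbitrarily close to $0$ --- if $\lambda$ is a one-parameter subgroup with a positive weight on some $v\ne0$ in $\mathbb H$, then $\lim_{z\to0}\lambda(z)v=0\notin G\cdot v$ --- so $G$, and with it $T\subseteq G$, acts trivially on $B$; thus $\mathrm{Stab}_G(t)=G$ for every $t$, and (ii) gives $\Aut_0^T(M_t)\cong G=\Aut_0^T(M_0)$, which is (3). This closes the cycle and proves the theorem.
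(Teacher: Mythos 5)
Your strategy is a genuinely different one from the paper's: you propose to set up a local GIT picture on the Kuranishi space (a weighted analogue of Sz\'ekelyhidi-type deformation theorems for cscK metrics) and read off all three equivalences from orbit closedness and stabilizers. The problem is that your entire proof rests on the ``dictionary'' (i)--(ii), and (i) --- $M_t$ admits a $v$-soliton if and only if $G\cdot t$ is closed --- is not proved; it is exactly the hard analytic theorem, and you explicitly defer it (``this is essentially \cite{FSZ22} in the weighted setting, and is where the real analysis lies''). But \cite{FSZ22} only provides the K\"ahler forms and the Ricci potential formula for the Kuranishi family; it contains no moment-map reduction, no Kempf--Ness argument, and no coercivity statement that persists under deformation of the complex structure. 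Without (i) the cycle $(3)\Rightarrow(2)\Rightarrow(1)\Rightarrow(3)$ is vacuous. Item (ii) is also asserted rather than proved: the stabilizer $\mathrm{Stab}_G(t)$ clearly injects into $\Aut^T(M_t)$, but the reverse inclusion (that every $T$-invariant holomorphic vector field on $M_t$ is induced from $M_0$) is not a formal consequence of versality; the paper obtains the needed comparison only after showing the action on $B$ is trivial, combined with upper semicontinuity of $\dim H^0(M_t,T'M_t)^T$. Finally, in $(1)\Rightarrow(3)$ your claim that a nontrivial reductive action produces non-closed orbits arbitrarily close to $0$ is stated for the linear action on $\mathbb H$, whereas you need such points to lie in the (possibly singular, proper) analytic subset $B$; this is why \cite{CaoSunYauZhang2022} argue via an explicitly constructed non-product test configuration rather than pure weight-space linear algebra.

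For contrast, the paper avoids the dictionary entirely. For $(2)\Rightarrow(1)$ it proves Lemma \ref{Kura3} (holomorphic $T$-invariant vector fields extending in $t$ force $\Aut_0^T(M_0)$ to act trivially on $H^1(M_0,T'M_0)$, hence on $B$), so the maximal torus $\TildeT$ acts trivially on $B$ and Proposition \ref{RST1} (an implicit-function-theorem argument on the reduced weighted scalar curvature, extending \cite{RST}) produces $(v,w)$-extremal metrics on $M_t$; these are then shown to be $(v,w)$-cscK, i.e.\ $v$-solitons, because $\Fut_{v,w}(t)$ vanishes --- computed explicitly from the Ricci potential formula \eqref{Kura2} since automorphisms preserve $\varphi(t)$. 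For $(1)\Rightarrow(3)$ the paper invokes the K-polystability characterization of weighted solitons (\cite{HanLi20}, \cite{LiChi21}, \cite{BLXZ}) to rule out a nontrivial $G$-action on $B$, then concludes by upper semicontinuity. If you want to pursue your route, you would need to actually establish the weighted analogue of the polystable-orbit theorem, which is a substantial piece of analysis not available off the shelf; as written, the proposal replaces the theorem by an equivalent unproven statement.
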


Although there are extensive studies on the existence of K\"ahler-Einstein metrics on Fano manifolds,
e.g. \cite{ACCFKMSSV}, 
currently, there are not many existence results on weighted solitons on Fano manifolds. 
Because of this lack of examples of weighted solitons, it is not easy to find non-trivial applications
of Theorem \ref{Main Thm}. 

As for the deformations of complex structures of polarized manifolds with weighted csck metrics, there is a result
by Hallam \cite{Hallam22} which states that a small deformation of a polarized manifold with a weighted csck metric
has a weighted cscK metric if and only if it is weighted K-polystable
with respect to smooth $T$-equivariant test configurations. The ``only if'' part follows from
a result of Apostolov-Jubert-Lahdili \cite{AJL21}. The result of Hallam extends the results of
Br\"onnle \cite{Bro11} and Szekelyhidi \cite{Sze10} for cscK metrics.

The outline of the proof of Theorem \ref{Main Thm} is as follows. The proof of Theorem \ref{Main Thm} above is
largely parallel to that of Theorem 1.1 of Cao-Sun-Yau-Zhang \cite{CaoSunYauZhang2022}. Just as the notion of K\"ahler-Einstein metrics are
generalized to constant scalar curvature K\"ahler (cscK for short) metrics and further to extremal K\"ahler metrics (\cite{calabi85}), the notion of 
weighted solitons are generalized to weighted cscK metrics and further to weighted extremal metrics (\cite{Lahdili18}, \cite{Inoue19}, \cite{Inoue22}). In Section 2, we extend a result of Rollin-Simanca-Tipler \cite{RST} for extremal K\"ahler 
metrics to
show that a weighted extremal metric on $M_0$ can be extended to weighted extremal metrics 
on small deformations $M_t$ if the maximal torus in the reduced automorphism group acts
on $B$ trivially. In Section 3 we review how weighted solitons are regarded as weighted cscK metrics.
In Section 4 we finish the proof of Theorem \ref{Main Thm}. We first review results in \cite{FSZ22} about the K\"ahler forms and their Ricci potentials for the
Kuranishi family. Next, we show a lemma which implies that the action of the maximal torus on $B$ is trivial
so that we can apply the result obtained in Section 2. We then show using the formula of the Ricci potential
obtained in \cite{FSZ22} that the weighted extremal metrics obtained in Section 2 are in fact weighted cscK metrics
which are in this case weighted solitons, proving (2) implies (1). That (1) implies (3) is proved using the
K-polystability characterization obtained 
by the works of Han-Li \cite{HanLi20}, Li \cite{LiChi21}, Blum-Liu-Xu-Zhuang \cite{BLXZ}
and closely following the arguments of \cite{CaoSunYauZhang2022}. That (3) implies (2) is trivial.

\section{Weighted scalar curvature}
In this section we review the weighted scalar curvature, which is also called the $(v,w)$-scalar curvature, introduced by Lahdili \cite{Lahdili18}, see also Inoue \cite{Inoue19}, \cite{Inoue22} for a similar idea.

Let $M$ be a compact K\"ahler manifold and $\Omega$ its K\"ahler class. Recall that the Lie algebra of $\Aut(M)$ is the Lie algebra $\mathfrak h(M)$ of all holomorphic
vector fields. We denote by $\Aut_r(M) \subset \Aut(M)$ the reduced automorphism group, i.e. the Lie algebra $\mathfrak h_r(M)$ of $\Aut_r(M)$
consists of holomorphic vector fields with non-empty zeros. They are obtained in the form $\grad'u$, i.e. the $(1,0)$-part of the gradient vector field, of some complex valued smooth functions $u$, see e.g. \cite{lebrunsimanca94}.

Let $T$ be a compact real torus in $\Aut_r(M)$. 
As in the Introduction $\Aut_r^T(M)$ denotes the centralizer of $T$ in $\Aut_r(M)$, i.e. the subgroup consisting of
$T$-equivariant automorphisms. In the Fano case $\Aut_r^T(M) = \Aut^T(M)$.
Let $\omega \in \Omega$ be a $T$-invariant K\"ahler form. Then $T$ acts on $(M,\omega)$ in the Hamiltonian way. 
Let $\mu_\omega : M \to \mathfrak t^\ast$ be the moment map where $\mathfrak t$ is the Lie algebra of $T$ and $\mathfrak t^\ast$ its dual space. Then $\Delta:=\mu_\omega(M)$ is a compact convex polytope.
This is independent of $\omega \in \Omega$ up to translation, but the ambiguity of translation is fixed by giving a normalization of $\mu_\omega$
which specifies the average by the integration.
Let $v$ be a positive smooth function on $\Delta$. 

As in Section 1, we also write $v = v(\mu)$ as a function on
$\Delta$ by considering $\mu$ to constitute the action-angle coordinates, 
and also write $v(\mu_\omega) := \mu_\omega^\ast v$ as a positive smooth function on $M$.
We define {\it $v$-scalar curvature} $S_v(\omega)$ of a $T$-invariant K\"ahler form $\omega$ by
$$ S_v(\omega):= v(\mu_\omega) S(\omega) + 2 \Delta_\omega v(\mu_\omega) 
+ \langle g_\omega,\mu_\omega^\ast Hess(v)\rangle$$
where $S(\omega)$ denotes the K\"ahler geometers' scalar curvature 
$$ S(\omega) = - g^{i\barj} \frac{\partial^2 }{\partial z^i \partial z^\barj} \log \det (g_{l\barl})$$
of $\omega$, $\Delta_\omega = \barpartial^\ast \barpartial$ the Hodge $\barpartial$-Laplacian on functions, and 
$$\langle g_\omega,\mu_\omega^\ast Hess(v)\rangle = g^{i\barj} v_{\alpha\beta}\mu^\alpha_i\mu^\beta_\barj$$
is the trace of the pull-back by $\mu_\omega$ of
 the Hessian $Hess(v)$ of $v$ on $\mathfrak t^\ast$ in which we express the moment map $\mu_\omega : M \to \mathfrak t^\ast$ as
$\mu_\omega(p) = (\mu^1(p), \cdots, \mu^\ell(p))$ with $d\mu^\alpha = i(X^\alpha)\omega$ for a basis $X^1, \cdots, X^\ell$ of $\mathfrak t$.
Thus, our $S_v$ is half of that in \cite{Lahdili18}.

Let $w$ be another positive smooth function on $\Delta$.
We define {\it $(v,w)$-scalar curvature} $S_{v,w}$ by
$$ S_{v,w} = \frac{S_v}{w(\mu_\omega)}.$$
The notion of $S_{v,w}$-scalar curvature was originally introduced as a generalization of
conformally K\"ahler, Einstein-Maxwell metrics after extensive studies such as \cite{LeBrun16}, \cite{AM},
\cite{FO17}, \cite{FO_reductive17}, 
\cite{Lahdili17_1}, \cite{Lahdili17_2}. Later it turned out that 
the $(v,w)$-cscK metrics include much more unexpected examples as mentioned in Section 1,(Hence if $g$ is a $(v,w)$-extremal metric then 
$\grad' S_{v,w} \subset \widetilde{\mathfrak t}$. But we do not assume this for the moment.
see also Section 3.

We call $g$ a {\it weighted extremal metric} or {\it $(v,w)$-extremal metric} if 
$$\grad'S_{v,w} = g^{i\barj}\frac{\partial S_{v,w}}{\partial z^\barj}\frac{\partial}{\partial z^i}$$
is a holomorphic vector field.

\begin{remark} In Section 3.2 of \cite{Lahdili18}, Lahdili also defined $(v,w)$-extremal K\"ahler
metrics. But his definition is slightly different from ours in that the extremal vector field belongs to $\mathfrak t$
in his case
but does not in our case.
\end{remark}

Define $L_v\varphi$ for complex valued smooth functions $\varphi$ by
$$ L_v \varphi = \nabla^i\nabla^j(v(\mu_\omega)\nabla_i\nabla_j \varphi),$$
and call $L_v$ the $v$-twisted Lichnerowicz operator. Obviously, $L_v$ is self-adjoint elliptic operator;
$$ \int_M (L_v\varphi)\, \barpsi\, \omega^m =  \int_M \varphi\, \overline{L_v \psi}\, \omega^m$$
where $m = \dim M$.
$L:=L_1$ is the standard Lichnerowicz operator. 
The kernel of $L_v$ consists of complex valued smooth functions $u$ such that $\grad'u$ is a holomorphic vector fields,
an thus $\Ker\, L_v  = \mathfrak h_r(M) = Lie(\Aut_r(M))$. We also define $L_{v,w}$ by
$$ L_{v,w} = \frac 1{w(\mu_\omega)}\, L_v.$$

Consider the one parameter family of metrics $g_{ti\barj} = g_{i\barj} + t\varphi_{i\barj}$. 
By straightforward computations one can show
\begin{equation}\label{Lich1}
\left.\frac d{dt}\right|_{t=0} S_v(g_t) = - L_v \varphi + S_v^i\,\varphi_i,
\end{equation}
\begin{equation}\label{Lich2}
\left.\frac d{dt}\right|_{t=0} S_{v,w}(g_t) = - L_{v,w} \varphi + S_{v,w}^i\,\varphi_i.
\end{equation}
It is also straightforward using \eqref{Lich2} to show the following Proposition \ref{Lich3}.
\begin{proposition}\label{Lich3}
A critical point of the weighted Calabi functional 
$$g \mapsto \int_M S_{v,w}^2 (g)\,w(\mu_{\omega})\, \omega^m$$ 
is a weighted extremal metric.
\end{proposition}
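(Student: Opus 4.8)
The plan is to run Calabi's first--variation computation for the Calabi functional, transplanted to the $(v,w)$--setting: the role of the Lichnerowicz operator is played by $L_{v,w}$ and that of the volume form $\omega^m$ by the weighted measure $w(\mu_\omega)\,\omega^m$. Fix a $T$--invariant K\"ahler form $\omega$ in the class $\Omega$ and vary it inside $\Omega$ through $T$--invariant forms, $\omega_t=\omega+t\sqrt{-1}\partial\barpartial\varphi$ with $\varphi\in C^\infty(M,\R)^T$, so that $g_{ti\barj}=g_{i\barj}+t\varphi_{i\barj}$ with $\varphi_{i\barj}=\partial_i\partial_\barj\varphi$. Writing $\mathcal C(g_t)=\int_M S_{v,w}^2(g_t)\,w(\mu_{\omega_t})\,\omega_t^m$, I would differentiate at $t=0$, collecting the three contributions coming from $\tfrac{d}{dt}S_{v,w}(g_t)$, from $\tfrac{d}{dt}w(\mu_{\omega_t})$, and from $\tfrac{d}{dt}\omega_t^m=(\tr_\omega\sqrt{-1}\partial\barpartial\varphi)\,\omega^m$.

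For the first contribution I would substitute \eqref{Lich2}. Its leading part produces $-2\int_M S_{v,w}\,(L_{v,w}\varphi)\,w(\mu_\omega)\,\omega^m$, and here the key point is that $L_{v,w}=w(\mu_\omega)^{-1}L_v$ is self--adjoint with respect to the weighted measure $w(\mu_\omega)\,\omega^m$, which is immediate from the stated self--adjointness of $L_v$ with respect to $\omega^m$; since $L_v$ has real coefficients it maps real functions to real functions, and $S_{v,w}$, $\varphi$ are real and $T$--invariant, so this term equals $-2\int_M (L_{v,w}S_{v,w})\,\varphi\,w(\mu_\omega)\,\omega^m$. It then remains to show that all the other, first--order--in--$\varphi$, contributions cancel.

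This cancellation is the computational heart of the argument, and the step I expect to be the main obstacle (it is the weighted analogue of Calabi's classical cancellation, enriched by the two weights and by the variation of the moment map). The offending terms are $2\int_M S_{v,w}\,(S_{v,w}^i\varphi_i)\,w(\mu_\omega)\,\omega^m$ from the remaining part of \eqref{Lich2}, the term $\int_M S_{v,w}^2\,\tfrac{d}{dt}\big|_0 w(\mu_{\omega_t})\,\omega^m$, and $\int_M S_{v,w}^2\,w(\mu_\omega)\,(\tr_\omega\sqrt{-1}\partial\barpartial\varphi)\,\omega^m$. To deal with the middle one I would compute the variation of the moment map: from $d\mu^\alpha_{\omega_t}=\iota(X^\alpha)\omega_t$ one gets $d\,\tfrac{d}{dt}\big|_0\mu^\alpha=\iota(X^\alpha)\sqrt{-1}\partial\barpartial\varphi$, and since $\varphi$ is $T$--invariant a short computation identifies the right--hand side with $d\big(g^{i\barj}(\partial_i\varphi)(\partial_\barj\mu^\alpha)\big)$; the additive constant is forced to be zero by the normalisation of $\mu_\omega$ (equivalently, because the polytope $\Delta$ is fixed, so that the $T$--fixed points, where $\grad\mu^\alpha=0$, must stay over the vertices of $\Delta$). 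Hence $\tfrac{d}{dt}\big|_0 w(\mu_{\omega_t})=\sum_\alpha (\partial_\alpha w)(\mu_\omega)\,g^{i\barj}(\partial_i\varphi)(\partial_\barj\mu^\alpha)$. Feeding this in, using the pointwise identity $h\,\tr_\omega\sqrt{-1}\partial\barpartial W+g^{i\barj}(\partial_i h)(\partial_\barj W)=g^{i\barj}\partial_i\!\big(h\,\partial_\barj W\big)$ with $h=S_{v,w}^2$ and $W=w(\mu_\omega)$, and integrating by parts against $\varphi$ with respect to $\omega^m$, the sum of the last two terms collapses to $-\int_M S_{v,w}^2\,\tfrac{d}{dt}\big|_0 w(\mu_{\omega_t})\,\omega^m$, and a further integration by parts shows it is cancelled by the $S_{v,w}^i\varphi_i$--term; so the three terms sum to zero. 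Every identity involved is elementary, but the bookkeeping is the only genuine work.

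Combining, $\tfrac{d}{dt}\big|_0\mathcal C(g_t)=-2\int_M (L_{v,w}S_{v,w})\,\varphi\,w(\mu_\omega)\,\omega^m$ for every $T$--invariant real $\varphi$. If $g$ is a critical point, then, $L_{v,w}S_{v,w}$ being itself real and $T$--invariant, I would take $\varphi=L_{v,w}S_{v,w}$ to get $\int_M (L_{v,w}S_{v,w})^2\,w(\mu_\omega)\,\omega^m=0$, whence $L_{v,w}S_{v,w}=0$. Since $\Ker L_{v,w}=\Ker L_v$ consists precisely of those $u$ for which $\grad'u$ is holomorphic, this means $\grad'S_{v,w}$ is a holomorphic vector field, i.e.\ $g$ is a weighted extremal metric, which is the assertion.
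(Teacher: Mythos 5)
Your proposal is correct and follows the route the paper intends: the paper offers no written proof beyond the remark that the statement is ``straightforward using \eqref{Lich2}'', and you carry out exactly that first-variation computation, the only real content being the self-adjointness of $L_{v,w}$ with respect to the weighted measure $w(\mu_\omega)\,\omega^m$ together with the cancellation of the zeroth-order terms, which follows in one line from the divergence identity $\int_M g^{i\barj}\partial_\barj\lb S_{v,w}^2\,w(\mu_\omega)\,\varphi_i\rb\,\omega^m=0$ combined with your formula for the variation of $w(\mu_{\omega_t})$. (One small slip in your bookkeeping: the sum of the $w$-variation term and the volume-variation term equals \emph{minus the $S_{v,w}^i\varphi_i$ term}, not minus the $w$-variation term, but your asserted total cancellation of the three terms is correct.)
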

As in \cite{futaki83.1} we can define the following invariants.
\begin{proposition}\label{Lich3.1}
Let $h_X \in \Ker L_v$ be the real Killing potential of $X \in \mathfrak t$, 
i.e. $i \grad'h_X = X'$.
Then $\Fut_v$ and $\Fut_{v,w}$ defined by
\begin{equation}\label{Lich4}
 \Fut_v (X) = \int_M (S_v - c_v)\, h_X\,\omega^m,
 \end{equation}
 and 
 \begin{equation}\label{Lich5}
 \Fut_{v,w} (X) = \int_M (S_{v,w} - c_{v,w})\, h_X\,w(\mu_\omega)\,\omega^m
 \end{equation}
are independent of choice of $\omega \in \Omega$ where 
$c_{v,w} = \int_M S_v\, \omega^m/\int_M w(\mu_\omega)\, \omega^m$ 
and $c_{v} = c_{v,1}$
which
are independent of $\omega \in \Omega$. 
\end{proposition}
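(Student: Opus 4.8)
\emph{Strategy.} The plan is to deduce every independence statement from a single variational computation performed along paths inside the space $\mathcal K_\Omega^T$ of $T$-invariant K\"ahler forms in $\Omega$, which is connected (it is convex in K\"ahler potentials). So it is enough to fix $X\in\frak t$ and a smooth path $\omega_t\in\mathcal K_\Omega^T$ with $\frac{d}{dt}\omega_t=\sqrt{-1}\,\partial\barpartial\psi_t$ for real $T$-invariant functions $\psi_t$, and to show that $t\mapsto\Fut_v(X)$ and $t\mapsto\Fut_{v,w}(X)$ are constant, together with the preliminary facts that $c_v$ and $c_{v,w}$ are honest constants and that $\Fut_v(X),\Fut_{v,w}(X)$ are unaffected by a constant shift of $h_X$. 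The last point is immediate from the definitions of the normalizing constants: $c_v=\int_MS_v(\omega)\,\omega^m/\int_M\omega^m$ gives $\int_M(S_v(\omega)-c_v)\,\omega^m=0$, and, since $S_{v,w}\,w(\mu_\omega)=S_v$, $c_{v,w}=\int_MS_v(\omega)\,\omega^m/\int_Mw(\mu_\omega)\,\omega^m$ gives $\int_M(S_{v,w}-c_{v,w})\,w(\mu_\omega)\,\omega^m=0$, so replacing $h_X$ by $h_X+\mathrm{const}$ changes neither \eqref{Lich4} nor \eqref{Lich5}. That $\int_Mw(\mu_\omega)\,\omega^m$ is independent of $\omega\in\Omega$ is the Duistermaat--Heckman theorem (the pushforward $(\mu_\omega)_\ast\omega^m$ depends only on $\Omega$ and on the fixed normalization of $\mu_\omega$), and that $\int_MS_v(\omega)\,\omega^m$ is independent of $\omega$ follows from Lahdili's identity in \cite{Lahdili18}: the term $\int_M2\Delta_\omega v(\mu_\omega)\,\omega^m$ vanishes, and an integration by parts rewrites $\int_M\big(v(\mu_\omega)S(\omega)+\langle g_\omega,\mu_\omega^\ast Hess(v)\rangle\big)\,\omega^m$ purely in terms of $c_1(M)$, $\Delta$ and the Duistermaat--Heckman measure.

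\emph{Main computation.} Writing $h_X$ and $\dot h_X:=\frac{d}{dt}h_X$ for the Killing potential of $X$ relative to $\omega_t$ and its variation (which, by $dh_X=i(X)\omega_t$, satisfies $d\dot h_X=i(X)\big(\sqrt{-1}\,\partial\barpartial\psi_t\big)$ up to an additive constant, and is $T$-invariant because $\psi_t$ is), and using $\frac{d}{dt}\omega_t^m=-(\Delta_{\omega_t}\psi_t)\,\omega_t^m$, differentiation of \eqref{Lich4} yields
\begin{equation*}
\frac{d}{dt}\Fut_v(X)=\int_M\Big(\frac{d}{dt}S_v(\omega_t)\Big)h_X\,\omega_t^m+\int_M(S_v-c_v)\,\dot h_X\,\omega_t^m-\int_M(S_v-c_v)\,h_X\,(\Delta_{\omega_t}\psi_t)\,\omega_t^m .
\end{equation*}
By \eqref{Lich1}, $\frac{d}{dt}S_v(\omega_t)=-L_v\psi_t+S_v^i\,(\psi_t)_i$, and the $-L_v\psi_t$ part of the first integral is $-\int_M(L_v\psi_t)\,h_X\,\omega_t^m=-\int_M\psi_t\,\overline{L_vh_X}\,\omega_t^m=0$, since $L_v$ is self-adjoint and $h_X\in\Ker L_v$ (because $\sqrt{-1}\,\grad'h_X=X'$ is holomorphic, $X\in\frak t\subset\frak h_r(M)$) is real-valued. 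The surviving contributions---the transport term $\int_MS_v^i\,(\psi_t)_i\,h_X\,\omega_t^m$, the $\dot h_X$-term, and the volume-variation term---are integrated by parts; using the moment-map relations $dh_X=i(X)\omega_t$ and $\sqrt{-1}\,\grad'h_X=X'$, the explicit form of $\dot h_X$, and $\int_M(S_v-c_v)\,\omega_t^m=0$, all the terms carrying $\Delta_{\omega_t}S_v$, $\Delta_{\omega_t}h_X$, the pairing $g^{i\barj}(\partial_{\barj}S_v)(\partial_ih_X)$, and the additive constant of $\dot h_X$ cancel in pairs, and what remains is a constant multiple of $\int_M\psi_t\,X\big(S_v(\omega_t)\big)\,\omega_t^m$. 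This is zero because $\omega_t$ is $T$-invariant, hence $S_v(\omega_t)$ is $T$-invariant and is killed by $X\in\frak t$; thus $\Fut_v(X)$ is constant along the path. The argument for $\Fut_{v,w}(X)$ is entirely parallel, with \eqref{Lich2} replacing \eqref{Lich1} and $L_{v,w}=w(\mu_\omega)^{-1}L_v$ replacing $L_v$: the principal term again vanishes, since $\int_M(L_{v,w}\psi_t)\,h_X\,w(\mu_\omega)\,\omega_t^m=\int_M(L_v\psi_t)\,h_X\,\omega_t^m=0$ and $\Ker L_{v,w}=\Ker L_v$, and the remaining terms---which now also involve $\frac{d}{dt}\big(w(\mu_{\omega_t})\,\omega_t^m\big)$, with $w(\mu_{\omega_t})$ $T$-invariant---cancel down to terms that vanish by the $T$-invariance of $S_{v,w}(\omega_t)$ and of $w(\mu_{\omega_t})$.

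\emph{Main obstacle.} The one genuinely computational point is this pairwise cancellation: after removing the $L_v$-term (or, for $\Fut_{v,w}$, the $L_{v,w}$-term), one must run the weighted integrations by parts and the Killing-potential identities carefully enough to see that every remaining contribution except the Lie-derivative term $X\big(S_v(\omega_t)\big)$ drops out. This is the weighted analogue of Futaki's original argument in \cite{futaki83.1} (and of Lahdili \cite{Lahdili18}); the $\Fut_{v,w}$ version, where the variations of $\mu_{\omega_t}$ and of $w(\mu_{\omega_t})$ also contribute, is somewhat longer but structurally identical. A secondary technical input is the $\omega$-independence of $\int_MS_v(\omega)\,\omega^m$, which I would simply take from \cite{Lahdili18}.
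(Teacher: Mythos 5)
Your proposal is correct and follows essentially the same route as the paper: a first-variation computation along a path of $T$-invariant forms in $\Omega$, in which the principal term dies because $L_v$ (resp.\ $L_{v,w}$ against the weight $w(\mu_\omega)\,\omega^m$) is self-adjoint and $h_X\in\Ker L_v$, and the remaining transport, Killing-potential-variation and volume-variation terms cancel after integration by parts. The only differences are cosmetic: the paper's cancellation is exact (it does not need to invoke $T$-invariance to kill a leftover $X(S_v)$ term), and you additionally spell out why $c_v$ and $c_{v,w}$ are $\omega$-independent, which the paper merely asserts.
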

\begin{proof} If $h_X(\omega)$ is the real Killing potential as in the statement of the proposition for $\omega \in \Omega$ then,
under the normalization $\int_M h_X\,\omega^m = 0$, we have
$h_X(\omega_t) = h_X + th_X^i\varphi_i$ where $\omega_t$ is the K\"ahler form of $g_t$ which was defined two lines above
the equation \eqref{Lich1}.
Hence
\begin{eqnarray*}
\left.\frac d{dt}\right|_{t=0} \int_M S_v(\omega_t)\,h_X(\omega_t)\,\omega_t^m &=&
\int_M \lb \lb -L_v \varphi + S_v^i\varphi_i\rb h_X + S_v h_X^i \varphi_i + S_v h_X \Delta \varphi\rb \omega^m \\
&=& - \int_M \varphi L_v h_X \omega^m = 0,
\end{eqnarray*}
and 
\begin{eqnarray*}
\left.\frac d{dt}\right|_{t=0} \int_M h_X(\omega_t)\,\omega_t^m = 0.
\end{eqnarray*}
Thus $\Fut_v$ is independent of $\omega \in \Omega$. 
Note however that the expression of \eqref{Lich4} does not depend on the normalization of $h_X$.
By a similar computation one can show that
$\Fut_{v,w}$ is independent of $\omega \in \Omega$.
\end{proof}

\begin{remark}
In \cite{Lahdili18} Lahdili shows for smooth test configurations, 
the slope of the weighted
Mabuchi functional is the weighted Donaldson-Futaki invariant.
Applying
this to the case of product test configurations also yields
Proposition 2.2.
\end{remark}

\begin{remark}\label{Lich5.1}If $g$ is a $(v,w)$-extremal metric with non-constant $S_{v,w}$ then
$$ \Fut_{v,w}(J\grad S_{v,w}) = \int_M (S_{v,w} - c_{v,w})^2\, w(\mu_\omega)\,\omega^m > 0.$$
\end{remark}
\begin{remark}A decomposition theorem  similar to that proved by Calabi \cite{calabi85} for extremal K\"ahler metrics holds for weighted extremal K\"ahler metrics,
see Theorem B.1 in \cite{Lahdili18}, also \cite{FO_reductive17}, \cite{Lahdili17_1}. 
A consequence of this is that, if $g$ is a weighted extremal metric, then the centralizer of $\grad'S_{v,w}$ in $\mathfrak h_r^T(M)$ is the complexification of the real Lie algebra of all
$T$-equivariant Killing vector fields with non-empty zeros. In particular, if $g$ has constant $(v,w)$-scalar curvature, then 
the identity component of $\Aut_{r}^T(M)$ is the
complexification of the identity component of $\Isom_r^T(M)$ consisting of isometries with non-empty fixed point set. 
\end{remark}

Let $(M,g)$ be a compact K\"ahler manifold of complex dimension $m$.
Let $\TildeT$ be the maximal torus in $\Aut_r (M)$ including $T$. 
Let $L^2_k(M)$ be the $k$-th Sobolev space with respect to the metric $g$ with weight $w(\mu_\omega)$. 
Here the weight $w(\mu_\omega)$ means that $L^2$-inner product is given by 
$$ (\phi,\psi) = \int_M\ \phi\psi\,w(\mu_\omega)\, \omega^m.$$
We take $k$ sufficiently large
so that $L^2_k(M)$-functions form an algebra. Let $L^2_{k,\TildeT}(M)$ be the subspace of $L^2_k(M)$ consisting of $\TildeT$-invariant functions.

Let $\mathcal H_g$ be the space of Killing potentials corresponding to $\Tildefrakt$. Then the functions in $\mathcal H_g$ are purely imaginary.
As in Proposition \ref{Lich3.1} we call a function in $i\mathcal H_g$ a real Killing potential.
Let $W_{k,g}$ be the orthogonal complement of $i\mathcal H_g$ in $L^2_{k,\TildeT}$:
$$ L^2_{k,\TildeT} = i\mathcal H_g \oplus W_{k,g}$$
with $L^2$-orthogonal projections
$$\pi_g^H : L^2_{k,\TildeT} \to i\mathcal H_g, \qquad \pi_g^W : L^2_{k,\TildeT} \to W_{k,g}.$$
Then $\pi^H_g(S_{v,w}(g))$ is a smooth function independent of the choice of $k$, and the gradient vector field $\grad' \pi^H_g(S_{v,w}(g))$
is independent of the choice of $g$ in the same K\"ahler class, see Theorem 3.3.3 in \cite{futaki88}, also \cite{futakimabuchi95}, and also Theorem 1.5 in 
\cite{Yaxiong22} for conformally K\"ahler Einstein-Maxwell metrics. The proof in the weighted $(v,w)$-case is identical to \cite{Yaxiong22}.
This vector field is called the {\it extremal K\"ahler vector field}. If $g$ is a weighted extremal metric then 
$$ \pi^H_g(S_{v,w}(g)) = S_{v,w}(g).$$
Hence $g$ is a weighted extremal metric if and only if
$$ \pi^W_g(S_{v,w}(g)) = 0.$$
\begin{definition}\label{reducedS} We call $S^{\mathrm{\mathrm{red}}}_{v,w}(g):=\pi^W_g(S_{v,w}(g))$ the reduced $(v,w)$-scalar curvature, or simply
reduced scalar curvature of $g$. Thus, $g$ is a weighted $(v,w)$-extremal metric if and only if $S^{\mathrm{red}}_{v,w}(g)=0$.
\end{definition}
We can then modify \eqref{Lich2} as
\begin{equation}\label{Lich2.1}
\left.\frac d{dt}\right|_{t=0} S^{\mathrm{red}}_{v,w}(g_t) = - L_{v,w} \varphi + S_{v,w}^{\mathrm{red}\,i}\,\varphi_i.
\end{equation}


Let $\varpi : \mathcal M \to B$ a complex analytic family of complex deformations with $M_0 = M$ where $B$ is an open set in $\bfC^k$ containing $0$ and we put $M_t :=\varpi^{-1}(t)$. 
We assume $(M_0,g_0)=(M,g)$ is a compact $(v,w)$-extremal K\"ahler manifold.
By the rigidity theorem of Kodaira-Spencer, $M_t$ is K\"ahler for all small $t$, see e.g. \cite{MorrowKodaira}. 
Note that the dimension of the Dolbeault 
cohomology is only upper semi-continuous on compact {\it complex} manifolds.
But on a compact 
{\it K\"ahler} manifold $M$ with continuously 
varying integral complex structure $J_t$, we have the Hodge decomposition
$$ \oplus_{p+q=r} H^{p,q}(M, J_t) = H^r(M, \bfC).$$
Since the dimension of $H^r(M, \bfC)$ is a topological invariant and independent of $t$
then the upper semi-continuity of the dimension of each component of the left hand side
implies that the dimension of $H^{p,q}(M, J_t)$ is independent of $t$. 

Let $\Omega_t$ be a smooth family of K\"ahler classes of $M_t$, i.e. $\Omega_t$ gives a smooth section of the vector bundle
$\{H^2(M_t)\}_{t \in B}$. Suppose that $\TildeT$ acts holomorphically on $\mathcal M \to B$ and trivially on $B$.
Thus $\TildeT$ acts on $M_t$ holomorphically for each $t \in B$.
Taking the average over the $\TildeT$-action we have a smooth family $g_t$ of $\TildeT$-invariant K\"ahler metrics such that the associated K\"ahler forms $\omega_t$ 
represent $\Omega_t$.

We denote by $L^2_{k,\TildeT}(M)$ the space of $\TildeT$-invariant real valued $L^2_k$-functions with respect to $g = g_0$ with weight $w(\mu_{\omega})$.
We shall write the $L^2$-inner product with weight  $w(\mu_{\omega})$ by $L^2(w)$.
We put 
$$H_t(M) = H^{1,1}(M_t,g_t) \cap H^2(M,\bfR).$$
For $\phi \in L^2_{k+4,\TildeT}(M)$ and $\alpha \in H_t(M)$ we put
$$ \omega_{t,\alpha,\phi} = \omega_t + \alpha + i\partial\barpartial \phi$$
which is a $\TildeT$-invariant real closed $(1,1)$-form on $(M, J_t)$ and 
$$ [\omega_{t,\alpha,\phi}] = \Omega_t + [\alpha].$$
Shrinking $B$ if necessary $H(M) = \cup_{t\in B} H_t(M)$ forms a trivial vector bundle over $B$. 
Let $h : B \times H_0(M) \to H(M)$, $(t,\alpha) \mapsto h_t(\alpha)$, be an isomorphism of vector bundles.
Note that the Sobolev spaces $L^2_{k,\TildeT}(M)$ is independent of $g_t$ for all small $t \in B$. 
Thus we consider $L^2_{k,\TildeT}$ as possessing varying norm corresponding to $g_t$. Let
$$ L^2_{k,\TildeT}(M) = i \mathcal H_{t,\alpha,\phi} \oplus W_{k,t,\alpha,\phi}$$
be the splitting of $L^2_{k,\TildeT}(M)$ into the space $i \mathcal H_{t,\alpha,\phi}$ of real Killing potentials and
its orthogonal complement $W_{k,t,\alpha,\phi}$ with respect to $\omega_{t,\alpha,\phi}$.
Let $P : L^2_{k,\TildeT}(M) \to W_{k,0}$ be the projection with respect to $g_0$. Thus $P=\pi^W_{g_0}$ in the previous notation.
Consider the map $\Phi : U \to B\times W_{k,0}$ defined on a small open neighborhood $U$ of $(0,0,0)$ in 
$B \times H_0(M) \times W_{k+4,0}$ to $B \times W_{k,0}$ by
$$\Phi(t,\alpha,\phi) = (t, P(S^{\mathrm{red}}_{v,w}(g_{t,h_t(\alpha),\phi}))).
$$
Here $g_{t,\beta,\phi}$ is the K\"ahler metric corresponding to the K\"ahler form $\omega_{t,\beta,\phi}$, and $S^{\mathrm{red}}_{v,w}(g_{t,\beta,\phi})$
is the reduced $(v,w)$-scalar curvature, c.f. Definition \ref{reducedS}.

\begin{proposition}[c.f. \cite{RST}]\label{RST1}
Let $g$ be a $(v,w)$-extremal metric where $v$ and $w$ are defined on the image of the moment map
$\mu_\omega : M \to \mathfrak t^\ast$. Let $\TildeT$ be a maximal torus in $\Isom(M,g)$ containing $T$. 
Suppose that $\TildeT$ acts holomorphically on $\mathcal M \to B$ and trivially on $B$. Then, by shrinking $B$
to a sufficiently small neighborhood of the origin if necessary, for arbitrary small perturbations 
$\Omega_t$ of the K\"ahler class $\Omega=\Omega_0$, there are weighted extremal metrics $g_t$ in $\Omega_t$.
\end{proposition}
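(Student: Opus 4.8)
The plan is to prove Proposition~\ref{RST1} by applying the implicit function theorem to the map $\Phi$ defined just above, running the argument of Rollin--Simanca--Tipler \cite{RST} with the weighted operators $L_{v,w}$, $S^{red}_{v,w}$ and the weighted $L^2$--projections $\pi^W_g,\pi^H_g$ in place of their classical counterparts. Since $\TildeT$ acts holomorphically on $\mathcal M\to B$ and trivially on $B$, it acts on each fibre $M_t$, so averaging gives a smooth family of $\TildeT$--invariant K\"ahler forms $\omega_t$ representing $\Omega_t$, and the finite--dimensional spaces $i\mathcal H_{g'}$ of real Killing potentials of $\TildeT$ depend smoothly on the metric $g'$. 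First I would record that $\Phi(0,0,0)=(0,0)$: as $g_0=g$ is $(v,w)$--extremal, $S^{red}_{v,w}(g_0)=0$ by Definition~\ref{reducedS}, hence $P(S^{red}_{v,w}(g_0))=0$. Next I would check that $\Phi$ is smooth between the relevant Sobolev completions: $(t,\alpha,\phi)\mapsto g_{t,h_t(\alpha),\phi}$ is smooth into $\TildeT$--invariant metrics; $g'\mapsto S_{v,w}(g')$ is smooth from $L^2_{k+4,\TildeT}$ to $L^2_{k,\TildeT}$ once $k$ is large enough that $L^2_{k}$ is an algebra, the weights $v(\mu_{\omega'}),w(\mu_{\omega'})$ depending smoothly on $g'$ through the moment map; and $S^{red}_{v,w}(g')=(\mathrm{id}-\pi^H_{g'})S_{v,w}(g')$ is smooth because $i\mathcal H_{g'}$ varies smoothly, while $P=\pi^W_{g_0}$ is fixed. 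These points are proved just as in \cite{RST}, \cite{futaki88}.

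The key step is the partial derivative of the second component of $\Phi$ in the $\phi$--variable at $(0,0,0)$. By the linearization formula \eqref{Lich2.1}, the first--order term drops out because $S^{red}_{v,w}(g_0)$ vanishes identically, so the variation along $\omega\mapsto\omega+s\,i\partial\barpartial\dot\phi$ gives $-L_{v,w}\dot\phi$; since $L_{v,w}\dot\phi$ is $L^2(w)$--orthogonal to $\Ker L_{v,w}\supseteq i\mathcal H_{g_0}$ we get $P(-L_{v,w}\dot\phi)=-L_{v,w}\dot\phi$, i.e. the derivative is $-L_{v,w}$ as an operator $W_{k+4,0}\to W_{k,0}$. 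I would then argue this is an isomorphism: $L_{v,w}=w(\mu_\omega)^{-1}L_v$ is elliptic and self--adjoint for $L^2(w)$, so by elliptic Fredholm theory $L^2_{k,\TildeT}$ splits as $\Ker L_{v,w}$ plus its $L^2(w)$--complement and $L_{v,w}$ restricts to an isomorphism between the complements in degrees $k+4$ and $k$; it therefore suffices to know that the $\TildeT$--invariant part of $\Ker L_{v,w}$ equals $i\mathcal H_{g_0}$. Here extremality enters: since $g_0$ is $(v,w)$--extremal and $\TildeT$--invariant, $J\grad S_{v,w}(g_0)$ is a Killing field commuting with $\TildeT$, hence lies in $\Tildefrakt$ by maximality of $\TildeT$ in $\Isom(M,g_0)$; the Calabi--type decomposition for weighted extremal metrics recalled in the Remark above then forces every $\TildeT$--invariant reduced holomorphic vector field into $\Tildefrakt^{\mathbb C}$, so a real $\TildeT$--invariant $u$ with $\grad'u$ holomorphic is a real Killing potential of some element of $\Tildefrakt$, i.e. $u\in i\mathcal H_{g_0}$ modulo the normalization used in the splitting $L^2_{k,\TildeT}=i\mathcal H_{g_0}\oplus W_{k,0}$. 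The implicit function theorem then yields, after shrinking $B$ and the neighbourhood of $0\in H_0(M)$, a smooth map $(t,\alpha)\mapsto\phi(t,\alpha)\in W_{k+4,0}$ with $\phi(0,0)=0$ and $P\bigl(S^{red}_{v,w}(g_{t,h_t(\alpha),\phi(t,\alpha)})\bigr)=0$.

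Write $g':=g_{t,h_t(\alpha),\phi(t,\alpha)}$. I expect the most delicate point to be the last one: upgrading $P(S^{red}_{v,w}(g'))=0$ to $S^{red}_{v,w}(g')=0$, so that $g'$ is genuinely $(v,w)$--extremal. The equation $P(S^{red}_{v,w}(g'))=0$ says $S^{red}_{v,w}(g')\in i\mathcal H_{g_0}$, while $S^{red}_{v,w}(g')=(\mathrm{id}-\pi^H_{g'})S_{v,w}(g')$ is by construction $L^2(w,g')$--orthogonal to $i\mathcal H_{g'}$; both $i\mathcal H_{g_0}$ and $i\mathcal H_{g'}$ have dimension $\dim\Tildefrakt$ (shrinking $B$ if needed), and $\pi^H_{g_0}$ restricts to the identity on $i\mathcal H_{g_0}$, so by continuity $\pi^H_{g'}$ stays injective on $i\mathcal H_{g_0}$ for $g'$ near $g_0$; an element of $i\mathcal H_{g_0}$ annihilated by $\pi^H_{g'}$ is therefore $0$, whence $S^{red}_{v,w}(g')=0$. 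Finally I would run elliptic regularity: $S_{v,w}(g')=\pi^H_{g'}S_{v,w}(g')$ has smooth right--hand side, being a Killing potential of $\TildeT$, and its linearization is the elliptic operator $L_{v,w}$, so $\phi(t,\alpha)$ is smooth and $g'$ is a smooth $(v,w)$--extremal K\"ahler metric. Taking $\alpha=0$ gives $[\omega_{t,h_t(0),\phi(t,0)}]=\Omega_t$, hence weighted extremal metrics $g_t\in\Omega_t$; letting $\alpha$ range over a neighbourhood of $0$ in $H_0(M)$ produces such metrics in all K\"ahler classes near $\Omega_t$ as well. The only genuinely new input compared with \cite{RST} is the verification that $L_{v,w}$, $S^{red}_{v,w}$ and the weighted $L^2$--projections retain ellipticity, self--adjointness and the kernel property above, all of which is arranged in the present section; the rest of the argument is parallel to \cite{RST}.
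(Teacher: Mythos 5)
Your proposal is correct and follows essentially the same route as the paper: the same map $\Phi$, the linearization \eqref{Lich2.1} giving $-L_{v,w}$ in the $\phi$-direction, the identification of the $\TildeT$-invariant kernel of $L_{v,w}$ with $i\mathcal H_0$ via maximality of $\TildeT$, and the implicit function theorem. You additionally spell out steps the paper leaves implicit (smoothness of $\Phi$, the upgrade from $P(S^{red}_{v,w}(g'))=0$ to $S^{red}_{v,w}(g')=0$ since $P=\pi^W_{g_0}$ while $S^{red}_{v,w}$ is defined via $\pi^W_{g'}$, and elliptic regularity), and these are handled correctly.
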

\begin{proof}We consider the map $\Phi$ above with $g_{0,0,0}$ a $(v,w)$-extremal metric, and thus $S^{\mathrm{red}}_{v,w}(g_{0,0,0}) = 0$.
Using \eqref{Lich2.1} one can show
$$
d\Phi_{(0,0,0)} (1,\dot\alpha,\dot\phi) = \begin{pmatrix} 1 & 0\\ \ast & -L_{v,w}\dot\phi + P(dS^{\mathrm{red}}_{v,w}(\dot\alpha)) \\  \end{pmatrix}.
$$
If $\psi \in W_{k,0}$ is in the cokernel of $d\Phi_{(0,0,0)}$ then 
$$L_{v,w}\psi = 0\ \ \text{and\ \ } ( P(dS^{\mathrm{red}}_{v,w}(\dot\alpha)),\psi)_{L^2(w)} = 0.$$
But $L_{v,w}\psi = 0$ implies  that $\psi$ is a $\TildeT$-invariant real Killing potential.
Since $\TildeT$ is a maximal torus we have $\psi \in i\mathcal H_0$ and the second condition above is automatically satisfied. 
Thus
$\psi \in i\mathcal H_0 \cap W_{k,0} = \{0\}$. By the implicit function theorem the proposition follows.
\end{proof}
Instead of the maximal torus $\TildeT$, one could use a smaller torus $T'$ such that $T\subset T' \subset \TildeT$,
and argue as in \cite{RST}. Then a non-degeneracy condition considered in \cite{lebrunsimanca94} is required as in Theorem 1 in \cite{RST}. 
In fact, if we use a smaller torus $T'$ such that $T\subset T' \subset \TildeT$, then we need to take
$\mathcal H_g$ to be the space of Killing potentials corresponding to $\mathfrak t' $, the Lie algebra of $T'$.
Then $L_{v,w}\psi = 0$ implies that $\psi$ is a Killing potential but it does not imply that it belongs to 
$\mathcal H_g$, so $\psi$ needs not be zero. Hence, in order to be able to use the implicit function theorem
we need the following condition:
``If $( P(dS^{\mathrm{red}}_{v,w}(\dot\alpha)),\psi)_{L^2(w)} = 0$ for any $\dot{\alpha} \in H_0(M)$ then $\psi = 0$.''
This is the non-degeneracy condition in \cite{RST} and \cite{lebrunsimanca94} where, in the case of \cite{lebrunsimanca94},
$T=T' = \{1\}$ and $P$ is the identity. 
See also Lemma 6 
in \cite{RST}.

\section{Weighted solitons on Fano manifolds.}

In this section we consider weighted solitons on Fano manifolds which form a subclass of weighted cscK metrics.
Let $M$ be a Fano manifold, and $\omega \in 2\pi c_1(M)$ be a K\"ahler form.

\begin{definition} Let $v$ be a positive smooth function on the image of the moment map of a Hamiltonian $T$-action.
We say that $\omega$ is a weighted $v$-soliton (or simply weighted soliton, also $v$-soliton) if
$$ \Ric(\omega) - \omega = i\partial\barpartial \log v(\mu_\omega).$$
\end{definition}

Examples of $v$-solitons are\\
(i) K\"ahler-Ricci soliton for $v(\mu) = \exp(\langle\mu, \xi\rangle)$ for $\xi \in \mathfrak t$ where
the linkage with $S_{v,w}$-cscK metrics was first found by Inoue \cite{Inoue19}, \cite{Inoue22},\\
(ii) Mabuchi soliton for $v(\mu)= \langle\mu,\xi\rangle + a$ a positive affine-linear function \cite{Mab01}, and\\
(iii) base metric which induce Calabi-Yau cone metrics outside the zero sections of the canonical line bundles
(Sasaki-Einstein metrics on the associated $U(1)$-bundles) for $v(\mu) = (\ell(\xi))^{-(m+2)}$ where $\ell(\xi) = \langle\mu,\xi\rangle + a$
is a positive affine-linear function (see Proposition 2 in \cite{AJL21}).

A $T$-invariant K\"ahler form $\omega \in 2\pi c_1(M)$ is a $v$-soliton if and only if
$\omega$ is $S_{v,w} = 1$ metric with 
\begin{equation}\label{soliton0}
w(\mu) = (m+\langle d\log v,\mu\rangle)v(\mu). 
 \end{equation}
This can be seen from the formula
\begin{equation}\label{soliton1}
 S_v - w(\mu_\omega) = v(\mu_\omega)\Delta_v(\log v(\mu_\omega) - f)
 \end{equation}
where $f \in C^\infty(M)$ is the Ricci potential of $\omega$, i.e. $S - m = \Delta f$, and $\Delta_v = v^{-1}\circ \barpartial^\ast \circ v \circ \barpartial$
in which 
$v$ and $v^{-1}$ denote the multiplications by $v(\mu_\omega)$ and $v(\mu_\omega)^{-1}$.
By \eqref{soliton1} we have
$$ \int_M (S_v - w(\mu_\omega)) \omega^m = 0,$$
and thus $c_{v,w} = 1$ and 
 \begin{eqnarray*}
 \Fut_{v,w} (X) = \int_M (S_v - w(\mu_\omega))\, h_X\,\omega^m.
 \end{eqnarray*}
 Using \eqref{soliton1} this can be rewritten as 
 \begin{equation}
  \Fut_{v,w}(X) = \int_M (JX)(\log v(\mu_\omega) - f)\,v(\mu_\omega)\,\omega^m. \label{Lich6}
 \end{equation}

A characterization of the existence of weighted solitons by Ding-polystability and K-polystability was described by Li \cite{LiChi21}, Theorem 1.17 and Theorem 1.21.
The story to this result may be summarized as follows. After the resolution of Yau-Tian-Donaldson conjecture by \cite{CDS3}, \cite{Tian12}, \cite{DatarSzeke16}, \cite{CSW}
where the Gromov-Hausdorff convergence was used, a variational proof without using Gromov-Hausdorff convergence was given in \cite{BBJ} under the condition of
uniform K-stability. Further in \cite{LiChi22}, the existence was shown under the condition of $G$-uniform stability. The work of \cite{LXZ22} shows that when $G$ contains the maximal
torus $G$-uniform stability is equivalent to K-polystability. Generalizing the result of \cite{LiChi22} for K\"ahler-Einstein metrics, Han-Li \cite{HanLi20} proved the existence of weighted solitons
under the condition of $G$-uniform stability for weighted case. In \cite{BLXZ} and \cite{LiChi21}, the equivalence of $G$-uniform stability when $G$ contains the maximal
torus and K-polystability for weighted case was shown.

\section{Geometry of Kuranishi family}

Let $\varpi : \mathfrak M \to B$ be the Kuranishi family of a Fano manifold $M$ satisfying \eqref{Kura}
as described in Section 1. Then the $v$-soliton $\omega$ on $M_0 = M$ remains to be K\"ahler forms
on $M_t,\ t\in B$,
by Theorem 1.4 in \cite{FSZ22}. Further, it was shown in Theorem 1.5 in \cite{FSZ22} that the Ricci form $\Ric(M_t,\omega)$
of $(M_t, \omega)$ is given by
\begin{eqnarray}\label{Kura2}
 \Ric(M_t,\omega) 
 = \omega + \partial_t\barpartial_t (f_0 + \log\det(I - \varphi(t)\overline{\varphi(t)})).
 \end{eqnarray}
 But since we assume $(M_0, \omega_0)$ with $\omega_0 = \omega$ is a $v$-soliton we have
 \begin{eqnarray}\label{Kura2.1}
 f_0 = \log v(\mu_{\omega_0}).
 \end{eqnarray}
Recall that $\varphi(t)$ in \eqref{Kura} can be considered as
\begin{eqnarray}
{}&&\varphi(t) \in A^{0,1}(T'M_0) \cong \Gamma(\operatorname{Hom}(T^{\prime\ast}M_0,T^{\prime\prime\ast}M_0)) = \Gamma(T^{\prime}M_0 \otimes T^{\prime\prime\ast}M_0), \nonumber\\
&& \quad \varphi(t) = \varphi^i{}_\barj (t) \frac{\partial}{\partial z^i} \otimes dz^\barj. \label{D1}
\end{eqnarray}
Here $z^i = z_0^i$ are local holomorphic coordinates of $M_0$, and we keep this notation below.
Then, $T^{\prime\ast}M_t$ is spanned by
\begin{eqnarray}\label{D2}
e^i := dz^i + \varphi^i{}_\barj(t) dz^\barj , \qquad i = 1, \cdots, m,
\end{eqnarray}
or equivalently, $T^{\prime\prime}M_t$ is spanned by 
\begin{eqnarray}\label{D2.1}
T_\barj := \frac{\partial}{\partial z^\barj} - \varphi^i{}_\barj(t) \frac{\partial}{\partial z^i}, \qquad j= 1, \cdots, m.
\end{eqnarray}

\begin{lemma}\label{Kura3}
Suppose that (2) of Theorem \ref{Main Thm} is satisfied. Then the identity component $\Aut^T_0(M)$ of $\Aut^T(M)$ 
acts on $H^1(M_0, T'M_0)\cong T'_0B$ trivially, and hence on $B$ trivially.
\end{lemma}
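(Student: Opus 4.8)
The plan is to let the finite-dimensional group $G:=\Aut^T_0(M_0)$ act on the Kuranishi family and to play the dimension hypothesis (2) of Theorem~\ref{Main Thm} against an orbit--stabilizer count on $B$. First I would record the structural input. A weighted $v$-soliton is an $S_{v,w}=1$ metric for the weight $w$ of \eqref{soliton0}, hence a $(v,w)$-cscK metric, so the decomposition theorem for weighted extremal metrics (Theorem~B.1 of \cite{Lahdili18}, recalled in the Remark above) shows that $G$ is the complexification of a maximal compact subgroup $K\supseteq T$ of $\Aut^T(M_0)$; in particular $G$ is reductive and $\mathrm{Lie}(G)=\mathfrak h^T(M_0)$. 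Averaging the Kuranishi construction over $K$ (passing if needed to an equivalent $K$-averaged model and shrinking $B$, as in \cite{CaoSunYauZhang2022}), I would arrange that $G$ acts holomorphically on $\varpi:\mathfrak M\to B$ fixing the central fibre $M_0$, with the induced linear action on $T'_0B$ equal to the natural representation of $G$ on $H^1:=H^1(M_0,T'M_0)\cong T'_0B$ coming from functoriality of cohomology; the operators $\barpartial^\ast$ and $\Delta_f$, $f=\log v(\mu_\omega)$, of \eqref{Kura} are compatible with this averaging because $\omega$ and $f$ are $T$-invariant and $K$ is compact.

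The heart of the argument is the local estimate, for $t\in B$ near $0$,
$$ \dim\mathfrak h^T(M_t)\ \le\ \dim\mathrm{Lie}(G_t),\qquad G_t:=\{g\in G:\ g\cdot t=t\} $$
(equality in fact holds, since each $g\in G_t$ preserves the fibre $M_t=\varpi^{-1}(t)$, so $\mathrm{Lie}(G_t)\hookrightarrow\mathfrak h^T(M_t)$; but only ``$\le$'' is needed). I would prove it by computing $\mathfrak h^T(M_t)$ from the deformation complex $(A^{0,\bullet}(T'M_0)^T,\,D_t)$, $D_t=\barpartial-[\varphi(t),\,\cdot\,]$ (cf.\ \eqref{D2.1}): splitting $A^0(T'M_0)^T=\mathfrak h^T(M_0)\oplus\Image\,\barpartial^\ast$ and noting that $D_t$ is injective on $\Image\,\barpartial^\ast$ for small $t$ (it is $\barpartial$ there at $t=0$) yields a natural injection $\mathfrak h^T(M_t)\hookrightarrow\mathfrak h^T(M_0)$, and the point is that its image is contained in $\mathrm{Lie}(G_t)$, i.e.\ a $T$-equivariant infinitesimal automorphism of $M_0$ extends over $M_t$ only if it is tangent to the $G$-orbit through $t$. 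This is the transversality of the $G$-equivariant Kuranishi slice, which I would establish following \cite{CaoSunYauZhang2022}. Granting the estimate: if $G$ did not act trivially on $H^1$ then, this action being the differential at $0$ of the $G$-action on $B$, the latter would also be nontrivial, so the fixed-point germ $B^G$ would be a proper subgerm of $B$; hence (the case $H^1=0$ being vacuous) some fibre $M_t$, with $t\in B$ arbitrarily close to $0$, would have $G_t\subsetneq G$, so $\dim\mathrm{Lie}(G_t)=\dim G_t<\dim G$ since $G$ is connected, and the estimate would give $\dim\mathfrak h^T(M_t)<\dim G=\dim\mathfrak h^T(M_0)$, contradicting (2). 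Thus $G$ acts trivially on $H^1$, and a fortiori on the $G$-invariant germ $B$; in particular the maximal torus $\TildeT$ of $G$ --- a maximal torus of $\Isom(M_0,\omega)$ containing $T$ --- acts trivially on $B$, which is exactly the hypothesis under which Proposition~\ref{RST1} applies.

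The step I expect to be the real obstacle is the estimate $\dim\mathfrak h^T(M_t)\le\dim\mathrm{Lie}(G_t)$ itself, equivalently that the Kuranishi family accounts for \emph{all} $T$-equivariant infinitesimal automorphisms of the nearby fibres. The inclusion $\mathrm{Lie}(G_t)\hookrightarrow\mathfrak h^T(M_t)$ and the injection $\mathfrak h^T(M_t)\hookrightarrow\mathfrak h^T(M_0)$ come out of the formal Hodge-theoretic setup; but ruling out ``extra'' infinitesimal automorphisms of $M_t$ not coming from the stabilizer is the one place where the reductivity of $G$ and the equivariance of the Kuranishi construction are genuinely needed, and it has to be carried out as in \cite{CaoSunYauZhang2022}. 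The remaining ingredients --- reductivity of $G$ with $\mathrm{Lie}(G)=\mathfrak h^T(M_0)$, the equivariant $G$-action on $B$, and the orbit--stabilizer conclusion --- are standard and already implicit in the framework of \cite{FSZ22} and \cite{CaoSunYauZhang2022}.
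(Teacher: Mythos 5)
Your strategy is genuinely different from the paper's, and its central step is left unproved. The paper uses hypothesis (2) much more directly: since $t\mapsto\dim H^0(M_t,T'M_t)^T$ is upper semicontinuous and, by (2), constant, one can choose a basis $v_1(t),\dots,v_\ell(t)$ of $H^0(M_t,T'M_t)^T$ depending holomorphically on $t$; rewriting the holomorphicity of $v_p(t)$ on $M_t$ over the fixed underlying manifold yields $\barpartial_0\,\wtv_p=-[\wtv_p,\varphi(t)]$, and differentiating at $t=0$ (where $\varphi(0)=0$ and $\wtv_p(0)=v_p$) exhibits $[v_p,\varphi_k]$ as $\barpartial_0$-exact. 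Thus each infinitesimal generator of $\Aut_0^T(M_0)$ annihilates every harmonic class $[\varphi_k]$, i.e.\ acts trivially on $H^1(M_0,T'M_0)$, and connectedness finishes the argument. No equivariant structure on the family and no orbit--stabilizer count is needed.

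Your route instead hinges on the estimate $\dim\mathfrak h^T(M_t)\le\dim\mathrm{Lie}(G_t)$ for the stabilizer $G_t$ of $t$ in a $G$-equivariant Kuranishi family. You correctly flag this as the real obstacle, but you do not prove it, and it cannot be outsourced to \cite{CaoSunYauZhang2022}: that paper (and this one, which follows it) proves the lemma by the direct computation above rather than by establishing your estimate. Moreover, the Hodge-theoretic sketch you give delivers only a first-order version of what you need: writing $\wtv=h+\barpartial^\ast\beta$ with $h\in\mathfrak h^T(M_0)$ and solving $\barpartial_0\wtv=-[\wtv,\varphi(t)]$ gives $\|\barpartial^\ast\beta\|=O(|t|)\|h\|$, hence only that the harmonic projection of $[h,\varphi(t)]$ is $O(|t|^2)$ --- i.e.\ $h$ kills the Kodaira--Spencer class to leading order --- not that $h$ lies exactly in $\mathrm{Lie}(G_t)$. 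Upgrading this to the exact stabilizer statement is an equivariant slice/local-universality theorem whose proof is at least as involved as the lemma itself. The surrounding scaffolding (reductivity of $G=\Aut_0^T(M_0)$ via the weighted Calabi decomposition, the $K$-averaged Kuranishi family, extension of the $K$-action to $G=K^{\bfC}$ on $B\subset H^1$, and the endgame deducing triviality from constancy of $\dim G_t$) is sound, but as written the proposal has a genuine gap at its key step, which the paper's argument avoids entirely.
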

\begin{proof}
We closely follow the arguments in \cite{CaoSunYauZhang2022}, page 823. But some missing
computations in \cite{CaoSunYauZhang2022} are supplemented, which are \eqref{p5} - \eqref{q2} below, for the reader's convenience. 
Since the Kuranishi family is a complex analytic family (Proposition 2.6, 2.7 in Chapter 4 of \cite{MorrowKodaira}, or Theorem 6.5, \cite{Kodaira86}), 
by the assumption there are $T$-invariant holomorphic vector fields $v_1(t),\, \cdots, v_\ell(t)$ which form a basis of $H^0(M_t,T'M_t)^T$ and holomorphic in t.
We regard these are vector fields on $M_0 \cong M$ since all $M_t$ are diffeomorphic to $M_0$. Since 
$(I -\varphi\barvarphi)^{-1} - \barvarphi(I-\varphi\barvarphi)^{-1}$ is invertible for small $t$ we may put
$$ \wtv_p := ((I -\varphi\barvarphi)^{-1} - \barvarphi(I-\varphi\barvarphi)^{-1})^{-1}v_p.$$
Let $z^1,\, \cdots,\, z^m$ and $w^1, \, \cdots,\, w^m$ be local holomorphic coordinates for $M_0$ and $M_t$ respectively defined on a common open set $U$ of $M$.
Note that (4.4) and (4.5) in \cite{FSZ22} imply
\begin{equation}\label{p5}
((I - \varphi\barvarphi)^{-1})^i{}_j = \frac{\partial z^i}{\partial w^\alpha} \frac{\partial w^\alpha}{\partial z^j}
\end{equation}
and
\begin{equation}\label{p5.1}
 - \overline{\varphi^i{}_\barj}\, ((I - \varphi\barvarphi)^{-1})^j{}_\ell = \frac{\partial z^\bari}{\partial w^\alpha} \, \frac{\partial w^\alpha}{\partial z^j} .
\end{equation}
Then we can see using \eqref{p5} and \eqref{p5.1} that
\begin{equation}\label{p4}
v_p= \wtv_p^j\,\frac{\partial w^\alpha}{\partial z^j}\,\frac{\partial}{\partial w^\alpha}.
\end{equation}
Since $v$ is holomorphic on $M_t$, we have $T_\barj v^\alpha = 0$, that is,
\begin{equation}\label{q1}
\lb\frac{\partial}{\partial z^\barj} - \varphi^i{}_\barj(t) \frac{\partial}{\partial z^i}\rb\lb\frac{\partial w^\alpha}{\partial z^k}\,\wtv_p^k\rb = 0.
\end{equation}
On the other hand
\begin{equation}\label{q2}
\lb T_\barj\lb\frac{\partial w^\alpha}{\partial z^k}\rb\rb\,\wtv_p^k = (\wtv_p\varphi)^i{}_\barj \, \frac{\partial w^\alpha}{\partial z^i}.
\end{equation}
From \eqref{q1} and \eqref{q2} we get
\begin{equation}\label{q3}
\barpartial_0\,\wtv_p = - [\wtv_p,\varphi].
\end{equation}
Since $\varphi(0) = 0$ we obtain
\begin{equation}\label{q4}
\barpartial_0\,\lb \left.\frac{\partial}{\partial t_k}\right|_{t=0} \wtv_p(t)\rb = - [\wtv_p,\varphi_k].
\end{equation}
This implies the infinitesimal generators of $\Aut_0^T(M)$ acts on $H^1(M_0, T'M_0)$ trivially. This completes the proof.
\end{proof}

\begin{proof}[Proof of Theorem \ref{Main Thm}]
We first prove that (2) implies (1). 
Let $G:=\Isom_0^T(M_0,\omega)$ be the identity component of the $T$-equivariant isometries of $(M_0,\omega)$ so that $G$ preserves both $\omega$
and $J_0$. 
Then since $\omega = \omega_0$ is a weighted $v$-soliton it is a $(v,w)$-cscK metric with $w(\mu) = (m + \langle d \log v, \mu\rangle) v(\mu)$
and $G^\bfC = \Aut_0^T(M)$. 
By Lemma \ref{Kura3}, $G$ acts on $H^1(M_0, T'M_0)$ trivially, which implies that $G$ preserves $\varphi(t)$ 
since $\varphi(t)$ is uniquely determined by $\sum_{i=1}^k t^i\varphi_i$ in Kuranishi's equation \eqref{Kura}. Hence $G$ also preserves $J_t$,
and thus $G \subset \Isom_0^T(M_t, \omega)$. But since 
$$\dim G^\bfC = \dim \Aut^T(M) = \dim \Aut^T(M_t) \ge \dim_\bfR \Isom_0^T(M_t,\omega)$$
we have $G = \Isom_0^T(M_t, \omega)$.
This implies that the Hamiltonian vector fields for $(M_0, \omega)$ remain to be Hamiltonian vector fields of $(M_t,\omega)$, and
the moment map $\mu_{\omega_t}$ is unchanged as $t$ varies. Thus 
\begin{equation} \label{Lich6.1}
v(\mu_{\omega_t}) = v(\mu_{\omega})
\end{equation}
for all $t \in B$.

Let $\TildeT$ be the maximal torus in $G$ containing $T$. Then since $\TildeT \subset \Aut_0^T(M)$ Lemma \ref{Kura3} 
implies that $\TildeT$ acts on $B$ trivially. By Proposition \ref{RST1}, shrinking $B$ if necessary, $M_t$ admits a $(v,w)$-extremal metric
for any $t \in B$. We wish to show this $(v,w)$-extremal metric is a $(v,w)$-cscK metric so that it is a $v$-soliton. To see this, by Remark \ref{Lich5.1},  it is sufficient to show 
the invariant $\Fut_{v,w}(t)$ in \eqref{Lich6} for $M_t$ vanishes. By \eqref{Kura2} and \eqref{Kura2.1}, we need to take $f$ in \eqref{Lich6},  to be
$$
f_t := \log v(\mu_{\omega_0}) + \log \det(I - \varphi(t)\overline{\varphi(t)}).
$$
Hence using \eqref{Lich6.1} we have
 \begin{equation} \label{Lich7}
  \Fut_{v,w}(t)(X) = - \int_M (JX)(\log \det(I - \varphi(t)\overline{\varphi(t)}))\,v(\mu_\omega)\,\omega^m.
 \end{equation}
But since any automorphism of $M_t$ preserves $\varphi(t)$ the derivative by $JX$ on the right hand side of \eqref{Lich7} vanishes.
Thus $\Fut_{v,w}(t)$ vanishes, 
and by Remark \ref{Lich5.1} the extremal $(v,w)$-extremal metric must be a $v$-soliton. This proves that (2) implies (1).

Next we prove that (1) implies (3). We first show the action of $G := \Isom_0^T(M_0,\omega)$ and $G^\bfC$ on $B$ is trivial. 

For this purpose we show that if this is not the case then there is a  non-product $T^\bfC$-equivariant test configuration 
 $\{(M_t, K_{M_t}^{-k})\}$ using 
arguments similar to \cite{CaoSunYauZhang2022}, page 822-823. 
Because of the construction of the Kuranishi family, the nontrivial action of $G^\bfC$ on $B$ induces
a one parameter subgroup $\lambda : \bfC^\ast \to G^\bfC$ whose $T^\bfC$-equivariant action on
$T'_0B \cong H^1(M_0, T'M_0)$ is nontrivial. 
We can choose a basis $e_1, \cdots, e_\ell$ of $H^1(M_0, T'M_0)$ 
such that $\lambda(s) e_i = s^{\kappa_i} e_i$ with $\kappa_i \in \bfZ$. Since this action is nontrivial some 
$\kappa_i$ is non-zero, and we choose and fix one of such $i$'s, and we may assume $\kappa_i > 0$ by replacing $\lambda$ 
by $\lambda^{-1}$. Consider the one-dimensional subfamily $\{ M_t \ |\ t=(0, \cdots, 0, t_i, 0, \cdots, 0),
| t_i | < \epsilon  \}$ of $\mathcal M \to B$ for small $\epsilon > 0$. Then we have an action of $\{s \ |\ |s| < 1\}$ 
corresponding to the $\lambda$-action expressed by $M_t \to M_{st}$.
All $M_t$ with $t \ne 0$ are biholomorphic 
because of the action of $\{s \ |\ 0 < |s| < 1\}$. Then the Kodaira-Spencer map $T'B_t \to H^1(M_t, T'M_t)$ is only surjective (see e.g. Theorem 2.1, (3) in
\cite{CaoSunYauZhang2022}) but
not isomorphic for $t\ne 0$, while $T'B_0 \to H^1(M_0, T'M_0)$ is isomorphic. It follows that, 
for $t \ne 0$, $M_{t}$ is not biholomorphic to $M_0$. Hence after a suitable base change we obtain a
non-product $T^\bfC$-equivariant test configuration  $\{(M_t, K_{M_t}^{-k})\}$.

But this is impossible since $M_t$ has a $v$-soliton and K-polystable with respect to $T^\bfC$-equivariant test
configurations, the central fiber $M_0$ also has a
 $v$-soliton and the Donaldson-Futaki invariant is zero (see Theorem 1.17 and 1.21 in \cite{LiChi21}, or
 \cite{AJL21}, or Theorem 1.0.7 in \cite{Hallam22}, or \cite{Hallam23}). Thus the action of $G$ on $B$ is trivial. 
 
Then as we argued at the beginning of this proof, $G$ preserves both $\omega$ and $\varphi(t)$, and thus we have an inclusion
$G \subset \Isom_0(M_t,\omega)$. In particular $T \subset \Isom_0(M_t,\omega) \subset \Aut_0(M_t)$ and 
$G \subset \Isom_0^T(M_t,\omega)$, 
$G^\bfC = \Aut_0^T(M_0) \subset \Aut_0^T(M_t)$. But since $\dim H^0(M_t, T^\prime M_t)^T$ is upper semi-continuous we obtain
$G^\bfC = \Aut^T(M_t)$ for all $t \in B$. This proves that (1) implies (3). That (3) implies (2) is trivial. This completes the proof
of Theorem \ref{Main Thm}
\end{proof}

\end{document}